\newcommand{\redsout}{\bgroup\markoverwith{\textcolor{red}{\rule[0.5ex]{2pt}{.4pt}}}\ULon}
\newcommand{\LC}{\left(}
\newcommand{\RC}{\right)}
\newcommand{\p}{\partial}
\numberwithin{equation}{section}
\newtheorem{theorem}{Theorem}[section]
\newtheorem{corollary}[theorem]{Corollary}
\newtheorem{proposition}[theorem]{Proposition}
\newtheorem{lemma}[theorem]{Lemma}
\newtheorem{definition}{Definition}[section]
\newtheorem{remark}{Remark}[section]
\newcommand{\R}{\mathbb R}
\author[Lai]{Ru-Yu Lai}
\address{School of Mathematics, University of Minnesota, Minneapolis, MN 55455, USA}
\curraddr{}
\email{rylai@umn.edu }
\author[Lin]{Yi-Hsuan Lin}
\address{Department of Mathematics, University of Washington, Seattle, WA 98195, USA}
\curraddr{}
\email{yihsuanlin3@gmail.com}
\thanks{\textbf{Key words}: Calder\'{o}n's problem, partial data, semilinear, fractional Schr\"{o}dinger equation, nonlocal, maximum principle}
\title[Global uniqueness for the fractional semilinear Schr\"{o}dinger equation]{Global uniqueness for the fractional semilinear Schr\"{o}dinger equation}
\begin{document}

\maketitle
\begin{abstract}
We study global uniqueness in an inverse problem for the fractional semilinear Schr\"{o}dinger equation $(-\Delta)^{s}u+q(x,u)=0$ with $s\in (0,1)$. We show that an unknown function $q(x,u)$ can be uniquely determined by the Cauchy data set. 
In particular, this result holds for any space dimension greater than or equal to $2$. Moreover, we demonstrate the comparison principle and provide a $L^\infty$ estimate for this nonlocal equation under appropriate regularity assumptions. % that extend earlier works. 
\end{abstract}

\section{Introduction}

Let $\Omega$ be a bounded domain in $\mathbb{R}^{n},\ n\geq2$ with Lipschitz
boundary $\partial\Omega$.
We study the nonlocal type inverse problem for the fractional semilinear
Schr\"{o}dinger equation with the exterior Dirichlet data 
\begin{equation}
\left\{
\begin{array}{rl}
(-\Delta)^{s}u+q(x,u)=0 & \mbox{ in }\Omega,\\
u=g & \mbox{ in }\Omega_{e},
\end{array}\right.\label{Dirichlet Problem}
\end{equation}
where $s\in(0,1)$, $g\in C^3_0 (\Omega_e)$, and
\begin{equation*}
\Omega_{e}:=\mathbb{R}^{n}\backslash\overline{\Omega}
\end{equation*}
is the exterior domain of $\Omega$. Here the fractional Laplacian $(-\Delta)^{s}$ is defined by 
\[
(-\Delta)^{s}u=c_{n,s}\mathrm{P.V.}\int_{\mathbb{R}^{n}}\dfrac{u(x)-u(y)}{|x-y|^{n+2s}}dy,
\]
for $u\in H^s(\mathbb R^n)$, where P.V. is the principal value and  
\begin{equation}
c_{n,s}=\frac{\Gamma(\frac{n}{2}+s)}{|\Gamma(-s)|}\frac{4^{s}}{\pi^{n/2}}\label{c(n,s) constant}
\end{equation}
is a constant that was explicitly calculated in \cite{di2012hitchhiks}.

The study of fractional nonlinear Schr\"odinger (FNS) equations arises in the investigation of the quantum effects in Bose-Einstein Condensation \cite{Uzar}. %Especially, the fractional Laplacian have applications in finance, and mathematical biology[REFs]. 
In ideal boson systems, the classical Gross-Pitaevskii (GP) equations can describe condensation of weakly interacting boson atoms at a low temperature where the probability density of quantum particles is conserved. However in the inhomogeneous media with long-range (nonlocal) interactions between particles, this yields the density profile no longer retains its shape as in the classical GP equations. This dynamics is described by the fractional GP equations, known as the FNS equation, in which the turbulence and decoherence emerge. It was observed in \cite{Kay} that the turbulence appears from the nonlocal property of the fractional Laplacian; while the local nonlinearity helps maintain coherence of the density profile.

To study the equation \eqref{Dirichlet Problem}, we assume that the function $q(x,t):\overline\Omega\times\mathbb{R}\rightarrow \mathbb{R}$ fulfilling the following conditions:
\begin{equation}
q(x,t)\mbox{ and }\p_t q(x,t)\mbox{ are continuous for }(x,t)\in\overline{\Omega}\times\mathbb{R}.\label{Condition 1}
\end{equation} Moreover, suppose that there exist constants $\mu>0$ and $\delta\in(2, (2n-2s)/(n-2s))$
such that 
\begin{equation} 
\begin{cases}
|q(x,t)|\leq\mu(1+|t|^{\delta-1})\mbox{ for all }(x,t)\in\Omega\times\mathbb{R},\\
\lim_{t\to0}\dfrac{q(x,t)}{t}=0\mbox{ uniformly in }x\in\Omega,%\\
%tq(x,t)\geq 0\mbox{ for all }(x,t)\in\Omega\times\mathbb{R}.%
\end{cases}\label{Condition 2}
\end{equation}
%Consider the function 
\[
%Q(x,t):=\int_{0}^{t}q(x,\tau)d\tau,
\]
and there exist constants $b_0 \in (0,1)$ and 
$r>0$ such that 
%\begin{equation}
	%0<a_{0}Q(x,t)\leq tq(x,t),\label{Condition 3}
%\end{equation}
\begin{align}\label{Condition 4}
0<{q(x,t)\over t}\leq b_0 \p_t q (x,t),\mbox{ for any }x\in\overline{\Omega},\ |t|\geq r.
\end{align}
Meanwhile, we further assume that there is a constant $0<M_0<\infty $ such that 
\begin{equation}\label{Condition for linearized equation}
0 \leq  \partial_tq(x,t)\leq M_0,\mbox{ for any }(x,t) \in \overline\Omega \times \mathbb R.
\end{equation}
The condition \eqref{Condition for linearized equation} will be  utilized to characterize the well-posedness for the linearized
equation of $(-\Delta)^{s}u+q(x,u)=0$. For the nonlinear equation \eqref{Dirichlet Problem}, 
the weak solution $u\in H^{s}(\mathbb{R}^{n})$ exists provided that
the coefficient $q(x,t)$ satisfies \eqref{Condition 1}-\eqref{Condition 4} is discussed in Section \ref{Section 2}. % for more details for the existence of the nonlinear equation.
However, very little is known in general about uniqueness of the weak solution $u$ of \eqref{Dirichlet Problem}. We would like to point out that the uniqueness up to translations of the nontrivial solution of the fractional nonlinear equation holds for certain nonlinearity $q(x,u)$, we refer to  \cite{frank2016uniqueness} and references therein. %[add refs, Frank, Lenzmann, Silvestre]. 

We consider the nonlocal inverse problem with related nonlocal Cauchy data set, instead of the Dirichlet to Neumann (DN) map, $\Lambda_q: u|_{\Omega_e}\to (-\Delta)^su|_{\Omega_e}$ defined in \cite{ghosh2016calder}, due to the lack of uniqueness of solutions for \eqref{Dirichlet Problem}. The Cauchy data set is defined by
\begin{align*} 
\mathcal{C}_{q}^{\Omega_e}=\left\{ (u|_{\Omega_{e}},\mathcal{N}_{q}^su|_{\Omega_{e}}):\ u\in H^s(\R^n) \hbox{ is a solution of \eqref{Dirichlet Problem}  }\right\},
\end{align*}
where 
\[
\mathcal{N}_{q}^s u(x):=c_{n,s}\int_{\Omega}\dfrac{u(x)-u(y)}{|x-y|^{n+2s}}dy
\]
stands for the nonlocal Neumann derivative and the constant $c_{n,s}$
is the same as \eqref{c(n,s) constant}. Note that when the equation \eqref{Dirichlet Problem} has a unique solution, the inverse problem is to recover $q(x,u)$ from the DN map. %in \cite{ghosh2016calder}. 
For more details about the
nonlocal Neumann derivative $\mathcal{N}_{q}^s$, DN map $\Lambda_{q}$, and their connection, we refer to \eqref{relation of Ns and DN map} and 
\cite{dipierro2014nonlocal,ghosh2016calder}.

In this paper, we focus on the Calder\'{o}n problem for the fractional semilinear Schr\"{o}dinger equation, 
that is, to recover the coefficient $q(x,u)$ from the collected external data set $\mathcal{C}_q^{\Omega_e}$. It is the nonlinear and nonlocal analogue of well-known Calder\'on problem, the mathematical model of electrical impedance tomography. As a noninvasive type of medical imaging, the electrical conductivity of the object is inferred from voltage and current measurements collected only on the surface of the object. 
There are several aspects in the Calder\'on problem, including uniqueness, stability estimates, reconstruction, and numerical algorithms for the known conductivity. 
For the classical semilinear Schr\"{o}dinger equation $-\Delta u +q(x,u)=0$, global uniqueness of an inverse problem with the related DN map on full boundary $\p\Omega$ is due to \cite{ isakov1994global, sun2010inverse} when $n\geq 3$ and to \cite{VictorN, sun2010inverse} when $n=2$. We refer to the survey paper \cite{uhlmann2009calderon} for recent developments in inverse boundary value problems for linear and nonlinear elliptic equations. 
The uniqueness result has been studied for fractional Schr\"{o}dinger equation in \cite{ghosh2016calder} and for variable coefficients nonlocal elliptic operators in \cite{ghosh2017calder}. The stability estimate for the fractional Schr\"{o}dinger equation was shown in \cite{ruland2017fractional}. 

%In the current paper, we consider the global uniqueness for nonlinear fractional equations. The result stated in Theorem \ref{MAIN THEOREM} generalizes the global uniqueness result of \cite{ghosh2016calder} for the case where $q(x,u)=q(x)$.

%\textit{\color{blue} In this paper, we are interesting in It is natural to ask that the In this paper, we are interesting in property will hold under the model of the semilinear fractional Schr\"{o}dinger equation or not. The main effort of this paper is to give a positive answer to this question. 
For each coefficient $q(x,u)$, we define a set $\mathcal A_q\subset \mathbb R^n \times \mathbb R$ by 
\begin{equation}\notag
\mathcal A_q:=\{(x,u)\in\Omega \times \mathbb R:\mbox{  there exists a solution }u=u(x)\mbox{ of } \eqref{Dirichlet Problem}\}.
\end{equation}
%Now, we are ready to answer the question by demonstrating our main theorem in this article as follows.

The main result in this paper is stated in Theorem \ref{MAIN THEOREM}, which solves the uniqueness for the fractional semilinear inverse problem with partial data 
%the partial data nonlocal inverse problem for the fractional semilinear Schr\"{o}dinger inverse problem 
for arbitrary dimension $n\geq 2$. Theorem \ref{MAIN THEOREM} generalizes global uniqueness result of \cite{ghosh2016calder} for the case where $q(x,u)=q(x)$.
\begin{theorem}
\label{MAIN THEOREM} Let $\Omega\subset \mathbb R^n $ be a bounded Lipschitz domain and $0<s<1$. Suppose that $q_{1}(x,t)$ and $q_{2}(x,t)$ satisfy the conditions \eqref{Condition 1}-\eqref{Condition for linearized equation}. Let $W\subset \Omega_e$ be an arbitrary open set. Suppose that the partial Cauchy data sets $\mathcal{C}_{q_1}^{W}=\mathcal{C}_{q_2}^{W}$, that is, 
\begin{equation}\label{Partial Cauchy data}
\left\{ (u_1|_W,\mathcal{N}_{q_1}^su_1|_W)\right\}=\left\{ (u_2|_W	,\mathcal{N}_{q_2}^su_2|_W) \right\},
\end{equation}
where $u_j$ are solutions to $(-\Delta)^su_j+q_j(x,u_j)=0$ in $\Omega$  with $u_j=g$ in $\Omega_e$ for $j=1,2$, for any $g\in C^3_0 (W)$.
Then $\mathcal{A}_{q_1}=\mathcal{A}_{q_2}$ and 
\begin{align*}
q_{1}(x,u(x))=q_{2}(x,u(x)) \mbox{ in }\mathcal A_{q_1}.
\end{align*}
\end{theorem}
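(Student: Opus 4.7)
The strategy is to reduce the inverse problem directly to the strong unique continuation property (UCP) of the fractional Laplacian established in \cite{ghosh2016calder}, bypassing any Isakov-type higher-order linearization. The key observation is that, for the nonlocal semilinear equation with matched exterior data, the Cauchy data equality together with UCP forces the two solutions to coincide inside $\Omega$, whence the semilinear PDEs immediately identify $q_1$ and $q_2$ pointwise on the solution set.

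The plan proceeds in three steps. First, relying on the well-posedness developed in Section \ref{Section 2}, under hypotheses \eqref{Condition 1}--\eqref{Condition for linearized equation} the Dirichlet problem \eqref{Dirichlet Problem} admits a unique weak solution $u_j^g \in H^s(\mathbb{R}^n)$ for every $g \in C^3_0(W)$ and $j=1,2$. Existence uses variational/mountain-pass methods powered by the subcritical growth \eqref{Condition 2} and the Ambrosetti--Rabinowitz-type condition \eqref{Condition 4}; uniqueness follows from the monotonicity \eqref{Condition for linearized equation} via the comparison principle for the nonlocal equation. Second, set $w := u_1^g - u_2^g \in H^s(\mathbb{R}^n)$. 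Since both solutions equal $g$ on $\Omega_e$, we have $w \equiv 0$ on $\Omega_e$, and a direct computation from the principal-value definition of $(-\Delta)^s$ gives, for $x \in \Omega_e$,
\begin{equation*}
(-\Delta)^s w(x) \,=\, -c_{n,s}\int_\Omega \frac{w(y)}{|x-y|^{n+2s}}\,dy \,=\, \mathcal{N}^s w(x),
\end{equation*}
mirroring the relation \eqref{relation of Ns and DN map}. The Cauchy data identity \eqref{Partial Cauchy data} yields $\mathcal{N}^s w = 0$ on $W$, so $w = 0$ and $(-\Delta)^s w = 0$ on the nonempty open set $W$. The strong UCP of $(-\Delta)^s$ from \cite{ghosh2016calder}, applied to $w \in H^s(\mathbb{R}^n)$, then forces $w \equiv 0$ in $\mathbb{R}^n$, hence $u_1^g = u_2^g =: u$ in $\Omega$. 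Finally, the two nonlinear PDEs give
\begin{equation*}
q_1(x, u(x)) \,=\, -(-\Delta)^s u(x) \,=\, q_2(x, u(x)) \qquad \text{in } \Omega,
\end{equation*}
and letting $g$ range over $C^3_0(W)$ yields the desired identity throughout $\mathcal{A}_{q_1} = \mathcal{A}_{q_2}$; existence of $u_j^g$ simultaneously for both $j$ at each admissible $g$ gives the set equality.

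The principal obstacle is not the inverse step itself but rather the semilinear well-posedness underlying Step 1: the existence theory for \eqref{Dirichlet Problem} under Ambrosetti--Rabinowitz-type structure is delicate in the nonlocal setting, and uniqueness relies essentially on the comparison principle and $L^\infty$ estimate advertised in the abstract. Once these semilinear ingredients are in place, the identification $q_1(x, u(x)) = q_2(x, u(x))$ on $\mathcal{A}_{q_1}$ follows as a direct consequence of the fractional UCP---notably simpler than in the local semilinear Calder\'on problem, where the absence of an analogous global UCP forces one to rely on first-order linearization together with density of products of solutions.
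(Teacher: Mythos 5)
Your core argument is correct, and it takes a genuinely shorter route than the paper. The paper's proof first establishes a linearization theorem (Theorem \ref{Linearization Theorem}, which is where the comparison principle and the $L^\infty$ bound of Proposition \ref{prop of sup bound} are actually used), applies the global uniqueness theorem for the \emph{linear} fractional Schr\"odinger equation (Theorem 1.1 of \cite{ghosh2016calder}) to obtain \eqref{equal potentials}, i.e. $\partial_t q_1(x,u_g^{(1)})=\partial_t q_2(x,u_g^{(2)})$, deduces that the linearized solutions coincide, integrates in the parameter $\eta$, and only at the very end applies the strong uniqueness property (Theorem 1.2 of \cite{ghosh2016calder}) to $\psi=u_g^{(1)}-u_g^{(2)}$ before subtracting the two semilinear equations. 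You observe that this final step is available at once: the hypothesis \eqref{Partial Cauchy data} pairs each solution $u_g^{(1)}$ with a solution $u_g^{(2)}$ of the second equation having the same exterior datum $g$ (a function in $C^3_0(W)$ is determined by its restriction to $W$) and the same nonlocal Neumann data on $W$; the difference then vanishes on all of $\Omega_e$ and, by your pointwise identity (equivalently by \eqref{relation of Ns and DN map}), its fractional Laplacian vanishes on $W$, so the strong uniqueness property gives $u_g^{(1)}\equiv u_g^{(2)}$ in $\mathbb R^n$, and subtraction of the equations yields $q_1(x,u)=q_2(x,u)$ on $\mathcal A_{q_1}=\mathcal A_{q_2}$. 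What the paper's longer route buys is the additional identification \eqref{equal potentials} of the linearized potentials along solutions; for the statement of Theorem \ref{MAIN THEOREM} itself your direct argument suffices.

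One caveat concerns your Step 1. Claiming a \emph{unique} weak solution of \eqref{Dirichlet Problem} is both unnecessary for your argument and at odds with the paper's setup: the Cauchy-data formulation is adopted precisely because uniqueness of the semilinear forward problem is not assumed, and the remark following Lemma \ref{Lemma existence} even invokes a variational theorem producing infinitely many solutions of the auxiliary problem. (Formally your comparison argument does give uniqueness: if $u,v$ solve \eqref{Dirichlet Problem} with the same $g$, then $w=u-v$ solves a linear equation with potential $Q(x)=\int_0^1\partial_t q(x,v+t(u-v))\,dt\in[0,M_0]$ by \eqref{Condition for linearized equation}, and Proposition \ref{max_p} applied to $\pm w$ forces $w\equiv0$; the clash with the multiplicity remark reflects a tension in the hypotheses themselves, since \eqref{Condition 4} forces superlinear growth of $q$ in $t$ while \eqref{Condition for linearized equation} caps it linearly.) The robust phrasing is to drop uniqueness entirely and argue, as in your Step 2, with the solution pairing supplied by \eqref{Partial Cauchy data}; then neither the comparison principle nor the $L^\infty$ estimate is needed for your route, whereas they are essential for the paper's linearization-based one.
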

\begin{remark}
Different from the Calder\'on problem for the semilinear (local) elliptic
equations (see \cite{isakov1994global,sun2010inverse}), without assuming 
\begin{equation}\label{q12}
  q_1(x,0)=q_2(x,0)=0 
\end{equation}
in Theorem \ref{MAIN THEOREM}, global uniqueness for the fractional semilinear Schr\"{o}dinger equation still holds.
\end{remark}

The proof of Theorem \ref{MAIN THEOREM} starts by showing the comparison principle for the linear fractional Schr\"{o}dinger equation and the $L^\infty$ estimate for the related solutions. The estimate plays a crucial role in the linearization argument that reduces the inverse problem for the fractional nonlinear equation to the inverse problem for the fractional linear equation. Notice that there exists a strong uniqueness property for the fractional Laplacian, that is, for any $u$ in $\R^n$ satisfies $u|_W=(-\Delta)^su|_W=0$ in some open set $W$, then $u$ is identically zero in $\R^n$ (see \cite[Theorem 1.2]{ghosh2016calder}). Applying this strong uniqueness property, we derive global uniqueness result without the condition \eqref{q12}.  % the uniqueness result for the linear fractional Schr\"odinger equation in \cite{ghosh2016calder}

The paper is organized as follows. In  section \ref{Section 2}, we
introduce fundamental tools of the fractional semilinear equation. 
The comparison principle and the estimate for solutions are discussed in section 3.
In section 4, we prove Theorem \ref{MAIN THEOREM} by utilizing the linearization scheme. % and use global uniqueness of the fractional Schr\"odinger equation (see \cite{ghosh2016calder}) .

\section{Preliminaries\label{Section 2}}

First, let us begin with the fractional Sobolev spaces. Let $0<s<1$,
$H^{s}(\mathbb{R}^{n})=W^{s,2}(\mathbb{R}^{n})$ is the $L^{2}$-based
fractional Sobolev space with norm 
\[
\|u\|_{H^{s}(\mathbb{R}^{n})}=\|u\|_{L^{2}(\mathbb{R}^{n})}+\|(-\Delta)^{s/2}u\|_{L^{2}(\mathbb{R}^{n})}.
\]
Let $O\subset\mathbb{R}^{n}$ be an open set (not necessarily bounded),
then we define 
\[
H^{s}(O)=\{u|_{O}:\  u\in H^{s}(\mathbb{R}^{n})\}
\]
and 
\[
H_{0}^{s}(O)=\mbox{closure of }C_{c}^{\infty}(O)\mbox{ in }H^{s}(O).
\]

Next, we characterize the existence of solutions to our main nonlocal
problem \eqref{Dirichlet Problem}. 
\begin{lemma}
\label{Lemma existence}(Existence of weak solutions) Let $s\in(0,1)$ and $q(x,t)$
be a scalar-valued function satisfying \eqref{Condition 1}-\eqref{Condition 4}. For any $g\in C^3_{0}(\Omega_{e})$,
there exists at least one solution $u\in H^{s}(\mathbb{R}^{n})$ of
the nonlocal Dirichlet problem \eqref{Dirichlet Problem}.\end{lemma}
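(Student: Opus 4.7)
The plan is to obtain $u$ as a critical point of an energy functional by the direct method of the calculus of variations. Since $g\in C_{0}^{3}(\Omega_{e})$ is compactly supported in $\Omega_{e}$, its zero-extension $\widetilde{g}$ to $\mathbb{R}^{n}$ belongs to $C_{c}^{3}(\mathbb{R}^{n})\subset H^{s}(\mathbb{R}^{n})$. Writing $u=v+\widetilde{g}$, one reduces the problem to finding $v\in H_{0}^{s}(\Omega)$ that is a critical point of
\begin{equation*}
J(v):=\tfrac{1}{2}\|(-\Delta)^{s/2}(v+\widetilde{g})\|_{L^{2}(\mathbb{R}^{n})}^{2}+\int_{\Omega}Q(x,v+\widetilde{g})\,dx,\qquad Q(x,t):=\int_{0}^{t}q(x,\tau)\,d\tau,
\end{equation*}
since $DJ(v)[\varphi]=0$ for every $\varphi\in H_{0}^{s}(\Omega)$ is exactly the weak form of \eqref{Dirichlet Problem}. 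The subcritical bound $\delta<(2n-2s)/(n-2s)<2_{s}^{*}:=2n/(n-2s)$ from \eqref{Condition 2}, combined with the fractional Sobolev embedding $H_{0}^{s}(\Omega)\hookrightarrow L^{\delta}(\Omega)$, ensures that $J$ is well-defined and of class $C^{1}$.

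The decisive step is a uniform lower bound on $Q$. Condition \eqref{Condition 4} forces $q(x,\cdot)$ to carry the sign of its argument for $|t|\geq r$, so $Q(x,\cdot)$ is nondecreasing on $[r,\infty)$ and nonincreasing on $(-\infty,-r]$; together with the continuity of $q$ on $\overline{\Omega}\times[-r,r]$ from \eqref{Condition 1}, this yields $Q(x,t)\geq -C_{0}$ uniformly in $(x,t)$. Expanding the quadratic term, applying Young's inequality to the cross term, and invoking the fractional Poincar\'e inequality on the bounded domain $\Omega$ then gives
\begin{equation*}
J(v)\geq \tfrac{1}{4}\|(-\Delta)^{s/2}v\|_{L^{2}(\mathbb{R}^{n})}^{2}-\|(-\Delta)^{s/2}\widetilde{g}\|_{L^{2}(\mathbb{R}^{n})}^{2}-C_{0}|\Omega|,
\end{equation*}
so $J$ is bounded below and coercive in $\|v\|_{H^{s}(\mathbb{R}^{n})}$.

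For a minimizing sequence $v_{n}$, coercivity produces a weak $H_{0}^{s}(\Omega)$-limit $v_{*}$ along a subsequence, and the fractional Rellich--Kondrachov theorem upgrades this to strong convergence in $L^{\delta}(\Omega)$. The quadratic part of $J$ is weakly lower semicontinuous, while the growth bound $|Q(x,t)|\leq\mu(|t|+|t|^{\delta}/\delta)$ combined with the $L^{\delta}$-convergence makes the nonlinear term continuous along the subsequence (by the generalized dominated convergence theorem). Hence $v_{*}$ attains the infimum, and $u=v_{*}+\widetilde{g}\in H^{s}(\mathbb{R}^{n})$ is the desired weak solution.

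The main technical obstacle I anticipate is the uniform lower bound on $Q$: \eqref{Condition 4} is an Ambrosetti--Rabinowitz-type inequality on $q(x,t)/t$ rather than on $Q$ itself, so one must integrate the differential inequality carefully and then splice together the bounded regime $|t|<r$ (where only continuity of $q$ is available) with the superlinear regime $|t|\geq r$. Should the hypotheses instead permit $Q$ to be unbounded below, one would have to abandon direct minimization in favor of a mountain pass argument, using $\lim_{t\to 0}q(x,t)/t=0$ to supply the local geometry at the origin.
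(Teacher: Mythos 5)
Your proof is correct, but it takes a genuinely different route from the paper. The paper performs the same initial reduction (extend $g$ by zero, set $w=u-\widetilde{g}$, absorb $(-\Delta)^s\widetilde{g}\in L^2$ as a source term to get \eqref{Nonlocal problem with zero boundary}), but then simply invokes Theorem 11.2 of the cited book of Molica Bisci--R\u{a}dulescu--Servadei, which under \eqref{Condition 1}--\eqref{Condition 4} yields infinitely many weak solutions in $H^s_0(\Omega)$; the paper picks one of them. You instead give a self-contained direct-method argument: the only part of \eqref{Condition 4} you use is the sign condition $q(x,t)/t>0$ for $|t|\ge r$, which together with continuity of $q$ on $\overline{\Omega}\times[-r,r]$ gives the uniform lower bound $Q(x,t)\ge -C_0$, and then coercivity follows from the quadratic term alone (via Young and fractional Poincar\'e), with the subcritical exponent $\delta<2n/(n-2s)$ supplying the $C^1$ structure and the compactness needed to pass to the limit in the nonlinear term. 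Your anticipated ``technical obstacle'' is in fact harmless exactly as you resolve it: no Ambrosetti--Rabinowitz geometry or mountain pass is needed, because the nonlinearity enters the energy with a favorable sign, so minimization suffices. What each approach buys: yours is elementary, avoids relying on a black-box theorem, and uses weaker hypotheses, but produces only one solution (a global minimizer); the paper's citation additionally gives the multiplicity of solutions with blowing-up Gagliardo seminorms, which is what motivates its Remark 2.1 and the formulation of the inverse problem through Cauchy data sets rather than a DN map. For the bare existence statement of Lemma \ref{Lemma existence}, your argument is fully adequate, with only routine details (e.g.\ identifying $H^s_0(\Omega)$ with $\widetilde{H}^s(\Omega)$ for the Lipschitz domain $\Omega$, and the a.e./domination step in the generalized dominated convergence argument) left to fill in.
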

\begin{proof}
For any $g\in C^3_0(\Omega_{e})$, define the function $\widetilde{g}$
to be an extension of $g$ by 
\[
\widetilde{g}:=\begin{cases}
0 & \mbox{ in }\Omega,\\
g & \mbox{ in }\Omega_{e}.
\end{cases}
\]
It is easy to see that $\widetilde{g}\in C^2_0(\mathbb{R}^{n})\subset H^2(\mathbb R^n)$.
Now, consider a function $w:=u-\widetilde{g}$ and we have the fact
$q(x,u(x))=q(x,w(x))$ for $x\in\Omega$ since $\widetilde{g}=0$
in $\Omega$. Then we can rewrite the equation \eqref{Dirichlet Problem}
as 
\begin{equation}
\left\{ 
\begin{array}{rl}
(-\Delta)^{s}w+q(x,w)+h(x)=0 & \mbox{ in }\Omega,\\
w=0 & \mbox{ in }\Omega_{e},
\end{array}\right.\label{Nonlocal problem with zero boundary}
\end{equation}
where $h(x)=(-\Delta)^{s}\widetilde{g}(x)\in L^{2}(\mathbb{R}^{n})$
(see \cite[Remark 2.2]{ghosh2016calder}). Therefore, by using \cite[Theorem 11.2]{bisci2016variational},
one can see that there exists a weak solution $w\in H_{0}^{s}(\Omega)$
to the equation \eqref{Nonlocal problem with zero boundary}. This
implies that there exists a solution $u=w+\widetilde{g}\in H^{s}(\mathbb{R}^{n})$
of \eqref{Dirichlet Problem} such that $u=g$ in $\Omega_{e}$. \end{proof}
\begin{remark}
In fact, from \cite[Theorem 11.2]{bisci2016variational}, there exist
infinitely many weak solutions $\{w_{k}\}_{k\in\mathbb{N}}\subset H_{0}^{s}(\Omega)$
of \eqref{Nonlocal problem with zero boundary} such that 
\[
\iint_{\mathbb{R}^{n}\times\mathbb{R}^{n}}\dfrac{|w_{k}(x)-w_{k}(z)|^{2}}{|x-z|^{n+2s}}dxdz\to\infty\mbox{ as }k\to\infty.
\]
We only choose $w_{\ell}\in H_{0}^{s}(\Omega)$ for some $\ell\in\mathbb{N}$
to be a weak solution of \eqref{Nonlocal problem with zero boundary}.
Hence, for this semilinear nonlocal problem, it is more natural to
formulate the Calder\'on problem by using the characterization of the Cauchy
data set.
\end{remark}
In this paper, our exterior Dirichlet data $g$ are given in $C^{3}_0(\Omega_e) \subset H^{2s}(\mathbb R^n)$, so that $(-\Delta)^s u\in H^{-s}(\mathbb R^n)$, where $u\in H^s(\mathbb R^n)$ is a solution of \eqref{Dirichlet Problem}. By using the relation 
\begin{align}\label{relation of Ns and DN map}
(-\Delta)^s u|_{\Omega_e}=\mathcal N_q^su|_{\Omega_e}-m u|_{\Omega_e} +(-\Delta)^s(E_0g)|_{\Omega_e}
\end{align} 
(see Lemma 3.2 in \cite{ghosh2016calder}), where $m\in C^\infty (\Omega_e)$ is defined by $m(x)=c_{n,s}\int _\Omega {1\over |x-y|^{n+2s}}dy$ and $E_0$ is a zero extension in $\Omega$ such that $E_0g(x)=g(x)$ for $x\in \Omega_e$, $E_0g(x)=0$ for $x\in \Omega$.  
Therefore, $\mathcal N^s_qu\in H^{-s}(\mathbb R^n)$ and the Cauchy data ${(u|_{\Omega_e},\mathcal N^s_qu|_{\Omega_e})}$ can be regarded as in the function space $H^s(\Omega_e)\times H^{-s}(\Omega_e)$ (indeed, $u|_{\Omega_e}\in H^{2s}(\Omega_e)\subset H^s (\Omega_e)$).

\section{$L^{\infty}$-estimate of weak solutions}

In this section, we offer a $L^{\infty}$-estimate for the solution of the fractional Schr\"odinger equation under suitable regularity assumptions. This estimate will be used in the linearization scheme of the inverse problem for the fractional semilinear equation. The result of this section is motivated by \cite{ros2015nonlocal} in which the author considers elliptic integro-differential operators.

\subsection{Comparison principle}
We begin by proving the maximum principle for the fractional Schr\"{o}dinger equation. The definition of weak solutions is stated as follows. %Before doing so, we offer the definition of weak solutions. \\
%	\textcolor{red}{$\checkmark$What are spaces for f and g in proposition? (The space of f and g are not important here. For consistency, I posed it as $L^\infty$ in proposition 3.1)})
\begin{definition}
	The function $u\in H^s (\mathbb R^n)$ is called a weak solution of the fractional Schr\"{o}dinger equation $(-\Delta)^su+au=f$ in $\Omega$ with $u=g$ in $\Omega_e$ if 
	\begin{align*}
	\int_{\mathbb R^n }(-\Delta)^{s/2} u\cdot(-\Delta)^{s/2}\phi dx+
	\int_{\Omega} au\phi dx=\int_\Omega f\phi dx 
	\end{align*}
	with $u-g\in \widetilde H^s(\Omega)$ for any $\phi\in C^\infty_c(\Omega)$. Here $\widetilde H^s(\Omega)$ is the closure of $C^\infty _c(\Omega)$ in $H^s(\mathbb R^n)$.
\end{definition}

The comparison principle can be derived directly from the following maximum principle. 
\begin{proposition}[Maximum principle]\label{max_p}
Let $\Omega\subset\mathbb{R}^{n}$ be a bounded
domain and $a(x)\in L^{\infty}(\Omega)$ be a nonnegative potential. Let
$u\in H^{s}(\mathbb{R}^{n})$ be a weak solution of 
\begin{equation}
\left\{\begin{array}{rl}
(-\Delta)^{s}u+a(x)u=f & \mbox{ in }\Omega,\\
u=g & \mbox{ in }\Omega_{e}.
\end{array}\right.\label{Linear fractional Schrodinger equation}
\end{equation}
Suppose $0\leq f\in L^\infty(\Omega)$ in $\Omega$ and $0\leq g\in L^\infty(\Omega_e)$ in $\Omega_{e}$. Then
$u\geq0$ in $\Omega$.
\end{proposition}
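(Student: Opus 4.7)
The plan is to test the weak formulation of \eqref{Linear fractional Schrodinger equation} against the negative part $u^-:=\max(-u,0)$ and show it must vanish. Since $u=g\geq 0$ in $\Omega_e$, we have $u^-=0$ a.e.\ in $\Omega_e$. Moreover, the map $t\mapsto t^-$ is a $1$-Lipschitz contraction fixing the origin, and such truncations preserve $H^s(\mathbb R^n)$ (this is the Stampacchia-type stability property of the fractional Dirichlet form); hence $u^-\in H^s(\mathbb R^n)$ with support in $\overline\Omega$, i.e.\ $u^-\in \widetilde H^s(\Omega)$. A density argument from $C^\infty_c(\Omega)$ then allows one to take $u^-$ as a test function.

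Inserting $\phi=u^-$, decomposing $u=u^+-u^-$, and using the pointwise identity $u\cdot u^-=-(u^-)^2$ (since $u^+u^-\equiv 0$), the weak identity becomes
\[
\int_{\mathbb R^n}(-\Delta)^{s/2}u^+\cdot(-\Delta)^{s/2}u^-\,dx-\big\|(-\Delta)^{s/2}u^-\big\|_{L^2(\mathbb R^n)}^2-\int_\Omega a(u^-)^2\,dx=\int_\Omega f\,u^-\,dx.
\]
Rewriting the cross term through the Gagliardo representation of the bilinear form gives
\[
\int_{\mathbb R^n}(-\Delta)^{s/2}u^+\cdot(-\Delta)^{s/2}u^-\,dx=\frac{c_{n,s}}{2}\iint_{\mathbb R^n\times \mathbb R^n}\frac{(u^+(x)-u^+(y))(u^-(x)-u^-(y))}{|x-y|^{n+2s}}\,dx\,dy,
\]
and the integrand is pointwise nonpositive: when $u(x)\ge 0,\ u(y)<0$ it reduces to $u(x)u(y)\le 0$ (and symmetrically), and it vanishes when $u(x),u(y)$ share a sign. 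Thus the cross term is $\le 0$.

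Combining with $a,f\ge 0$ and $u^-\ge 0$, the weak identity forces
\[
\big\|(-\Delta)^{s/2}u^-\big\|_{L^2(\mathbb R^n)}^2+\int_\Omega a(u^-)^2\,dx\le -\int_\Omega f\,u^-\,dx\le 0,
\]
so both quantities vanish. A fractional Poincar\'e inequality on $\widetilde H^s(\Omega)$ (or directly the Fourier description combined with $u^-\in L^2(\mathbb R^n)$ and $u^-|_{\Omega_e}=0$) then yields $u^-\equiv 0$ in $\mathbb R^n$, hence $u\ge 0$ in $\Omega$. The only genuinely delicate point is justifying $u^-$ as an admissible test function: one needs the Lipschitz-composition stability of $H^s$ together with a density argument to extend test functions from $C^\infty_c(\Omega)$ to $\widetilde H^s(\Omega)$. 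Once that is in place, the remainder is a straightforward sign analysis analogous to the local case $s=1$.
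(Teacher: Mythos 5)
Your argument is correct, and it shares the paper's starting point --- testing the weak formulation with the negative part $u^-$ and exploiting the sign of the cross term $(u^+(x)-u^+(y))(u^-(x)-u^-(y))\le 0$ --- but it closes the argument differently. The paper splits the \emph{integration domain}: it writes the form over $\mathbb{R}^{2n}\setminus(\Omega_e\times\Omega_e)$ as an $\Omega\times\Omega$ piece plus a cross piece with $\Omega_e$ (using $u=g\ge 0$ there), argues by contradiction, and must separately rule out the case that $u^-$ is a nonzero constant in order to make the key inequality strict --- a step that involves evaluating $(-\Delta)^su$ pointwise and is the most delicate part of their proof. You instead split the \emph{function}, $u=u^+-u^-$, keep everything as a global energy identity, and arrive directly at
\[
\bigl\|(-\Delta)^{s/2}u^-\bigr\|_{L^2(\mathbb{R}^n)}^2+\int_\Omega a\,(u^-)^2\,dx\le 0,
\]
from which $u^-\equiv 0$ follows by the fractional Poincar\'e inequality or the Fourier characterization; this avoids both the contradiction scheme and the ``$u^-$ cannot be constant'' detour, and it yields $u\ge 0$ on all of $\mathbb{R}^n$ at once. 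The only point to be careful about --- and you flag it yourself --- is the admissibility of $u^-$ as a test function: you need $u^-\in H^s(\mathbb{R}^n)$ (Lipschitz truncation stability), the identification of functions supported in $\overline\Omega$ with elements of $\widetilde H^s(\Omega)$ (which uses that $\Omega$ is Lipschitz, as in the paper's standing assumptions, even though the proposition states only ``bounded domain''), and density of $C^\infty_c(\Omega)$ so the weak identity extends to such test functions; the paper's own proof tacitly relies on the same facts, so this is not a gap relative to their standard of rigor.
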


\begin{proof}
	If $u\in H^{s}(\mathbb{R}^{n})$ is a weak solution of \eqref{Linear fractional Schrodinger equation},
	by the weak formulation, we have 
	\begin{equation}
		\int_{\mathbb{R}^{n}}(-\Delta)^{s/2}u\cdot(-\Delta)^{s/2}\phi dx+\int_{\Omega}au\phi dx=\int_{\Omega}f\phi dx,\label{weak forumlation}
	\end{equation}
	for any $\phi\in H_{0}^{s}(\Omega)$. Note that 
	\begin{align*}
		\int_{\mathbb{R}^{n}}(-\Delta)^{s/2}u\cdot(-\Delta)^{s/2}\phi dx 
		& = \iint_{\mathbb{R}^{2n}}\dfrac{(u(x)-u(z))(\phi(x)-\phi(z))}{|x-z|^{n+2s}}dxdz\\
		& =   \iint_{\mathbb{R}^{2n}\backslash (\Omega_{e}\times\Omega_e ) }\dfrac{(u(x)-u(z))(\phi(x)-\phi(z))}{|x-z|^{n+2s}}dxdz,
	\end{align*}
	where we have used $\phi\equiv0$ in $\Omega_{e}$. 
	
	Next, we write $u=u^{+}-u^{-}$ in $\Omega$, where $u^{+}=\max\{u,0\}\chi_{\Omega}$
	and $u^{-}=\max\{-u,0\}\chi_{\Omega}$, with 
	$$\chi_{\Omega}=\begin{cases}
	1 & \mbox{ for }x\in\Omega\\
	0 & \mbox{ otherwise}
	\end{cases}$$
	being the characteristic function. Notice that $u^{+},u^{-}\in H^{s}(\mathbb{R}^{n})$
	due to $u\in H^{s}(\mathbb{R}^{n})$. Recall $u=g\geq0$ in $\Omega_{e}$, then we can take $\phi:=u^{-}\in H_{0}^{s}(\Omega)$
	as a test function. We assume that $u^{-}$ is not identically zero,
	and we want to prove that it will lead to a contradiction.
	
	Since $f\geq0$ and $\phi=u^{-}\geq0$, the right hand side of \eqref{weak forumlation}
	\begin{equation}
		\int_{\Omega}f\phi dx\geq0.\label{nonnegative source}
	\end{equation}
	On the other hand, one has
	\begin{align*}
		& \iint_{\mathbb{R}^{2n}\backslash(\Omega_{e}\times\Omega_e)}\dfrac{(u(x)-u(z))(\phi(x)-\phi(z))}{|x-z|^{n+2s}}dxdz\\
		= & \iint_{\Omega\times\Omega}\dfrac{(u(x)-u(z))(u^{-}(x)-u^{-}(z))}{|x-z|^{n+2s}}dxdz\\
		& +2\int_{\Omega}\int_{\Omega_{e}}\dfrac{(u(x)-g(z))u^{-}(x)}{|x-z|^{n+2s}}dzdx\\
		= & I+II, 
	\end{align*}
	where 
	\begin{align*}
		I & :=   \iint_{\Omega\times\Omega}\dfrac{(u(x)-u(z))(u^{-}(x)-u^{-}(z))}{|x-z|^{n+2s}}dxdz,\\
		II & :=   2\int_{\Omega}\int_{\Omega_{e}}\dfrac{(u(x)-g(z))u^{-}(x)}{|x-z|^{n+2s}}dzdx.
	\end{align*}

	To estimate $I$, since $(u^{+}(x)-u^{+}(z))(u^{-}(x)-u^{-}(z))\leq0$, we obtain 
	\begin{align}
		I %=& \iint_{\Omega\times\Omega}\dfrac{(u(x)-u(z))(u^{-}(x)-u^{-}(z))}{|x-z|^{n+2s}}dxdz\nonumber \\
		\leq & -\iint_{\Omega\times\Omega}\dfrac{(u^{-}(x)-u^{-}(z))^{2}}{|x-z|^{n+2s}}dxdz<0.\label{negative inequality}
	\end{align}
	The last strict inequality holds because $u^{-}$ can not be a constant
	in $\Omega$. If $u^{-}$ is a constant, which means $u\equiv-c_{0}$
	is a negative constant in $\Omega$ (for some constant $c_{0}>0$).
	By the definition of the fractional Laplacian, one can see that for
	$x\in\Omega$, 
	\begin{align*}
		(-\Delta)^{s}u(x) & =  c_{n,s}\mbox{P.V.}\int_{\mathbb{R}^{n}}\dfrac{-c_{0}-u(z)}{|x-z|^{n+2s}}dz\\
		& =   c_{n,s}\int_{\Omega_{e}}\dfrac{-c_{0}-g(z)}{|x-z|^{n+2s}}dz<0,
	\end{align*}
	since $g(z)\geq0$ for $z\in\Omega_{e}$. Therefore, by using \eqref{Linear fractional Schrodinger equation}
	and $a\geq0$ in $\Omega$, we know that 
	\[
	0\leq f=(-\Delta)^{s}u+au<0\mbox{ in }\Omega,
	\]
	which leads to a contradiction. Hence, $u^{-}$ can not be a constant.
	
	For $II$, since $g(z)\geq0$ in $\Omega_{e}$ and $u(x)u^-(x)\leq 0$ in $\Omega$, we deduce that
	$II\leq0$. Therefore, 
	\[
	\iint_{\mathbb{R}^{2n}\setminus(\Omega_{e}\times \Omega_e) }\dfrac{(u(x)-u(z))(\phi(x)-\phi(z))}{|x-z|^{n+2s}}dxdz<0.
	\]
	which contradicts to \eqref{weak forumlation} (because $f\geq0$
	in $\Omega$ and $g\geq0$ in $\Omega_{e}$).
\end{proof}

With the maximum principle, the comparison principle for the fractional Schr\"{o}dinger equation follows immediately. 

\begin{corollary}[Comparison principle]\label{comparison}  
	Let $u_1$ and $u_2$ be weak solutions of 
\begin{equation*}
\left\{\begin{array}{rl}
(-\Delta)^{s}u_1+a(x)u_1=f_1 & \mbox{ in }\Omega,\\
u_1=g_1 & \mbox{ in }\Omega_{e},
\end{array}\right.\ and\  
\left\{\begin{array}{rl}
(-\Delta)^{s}u_2+a(x)u_2=f_2 & \mbox{ in }\Omega,\\
u_2=g_2 & \mbox{ in }\Omega_{e},
\end{array}\right.
\end{equation*}
respectively.
Suppose that $f_1 \geq f_2$ in $\Omega$ and $g_1 \geq g_2$ in $\Omega_e$. Then $u_1\geq u_2$ in $\Omega$.
\end{corollary}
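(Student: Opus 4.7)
The plan is to reduce the comparison principle to the maximum principle of Proposition \ref{max_p} by applying it to the difference $w := u_1 - u_2$. Because both equations feature the \emph{same} potential $a(x)$ and the operator $(-\Delta)^s + a(x)$ is linear in $u$, this subtraction is clean.

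First I would note that $w \in H^s(\mathbb R^n)$ since each $u_j \in H^s(\mathbb R^n)$. Subtracting the weak formulations for $u_1$ and $u_2$, term by term, yields
\[
\int_{\mathbb R^n} (-\Delta)^{s/2} w \cdot (-\Delta)^{s/2} \phi \, dx + \int_\Omega a\, w\, \phi \, dx = \int_\Omega (f_1 - f_2)\, \phi \, dx
\]
for every $\phi \in C_c^\infty(\Omega)$. The exterior condition is also satisfied, since
\[
w - (g_1 - g_2) = (u_1 - g_1) - (u_2 - g_2) \in \widetilde H^s(\Omega).
\]
Hence $w$ is a weak solution of $(-\Delta)^s w + a(x) w = f_1 - f_2$ in $\Omega$ with $w = g_1 - g_2$ in $\Omega_e$.

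Next I would verify the sign hypotheses required by Proposition \ref{max_p}: the source $f_1 - f_2 \geq 0$ in $\Omega$ and the exterior datum $g_1 - g_2 \geq 0$ in $\Omega_e$ follow directly from assumption, and $a \in L^\infty(\Omega)$ is nonnegative by the standing hypothesis on the linear equation. Proposition \ref{max_p} therefore applies to $w$ and gives $w \geq 0$ in $\Omega$, which is exactly $u_1 \geq u_2$ in $\Omega$.

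There is essentially no obstacle in this argument: it is the standard reduction of a comparison principle to a maximum principle, and it relies only on the linearity of the operator together with the identical potential in the two equations. All the genuinely nonlocal analytic work — splitting into positive and negative parts, handling the $\Omega \times \Omega_e$ cross-term in the Gagliardo seminorm, and the strict-inequality contradiction — has already been carried out in the proof of Proposition \ref{max_p}, so the proof reduces to the bookkeeping outlined above.
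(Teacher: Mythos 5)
Your proof is correct and follows exactly the paper's route: the paper also proves the corollary by setting $u := u_1 - u_2$ and applying Proposition \ref{max_p}, with your write-up simply spelling out the linearity and sign-hypothesis checks that the paper leaves implicit.
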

\begin{proof}
	Let $u:=u_1-u_2$ and apply proposition \ref{max_p}, then we complete the proof. Furthermore, one can conclude that $u_1\geq u_2$ in $\mathbb R^n$.
\end{proof}
\begin{remark}
From the above comparison principle, once the solution exists, the uniqueness will automatically hold for the fractional linear Schr\"{o}dinger equation \eqref{Linear fractional Schrodinger equation}.
\end{remark}

\subsection{$L^\infty$ bounds for solutions}
The main goal of this section is stated as follows.
\begin{proposition}\label{prop of sup bound}
Suppose $f\in L^\infty(\Omega)$ and $g\in L^\infty(\Omega_e)$. Let $u$ be a solution to \eqref{Linear fractional Schrodinger equation}, then the following $L^\infty$ estimate 
\begin{align}\label{sup norm estimate}
\|u\|_{L^\infty(\Omega)}\leq \|g\|_{L^\infty (\Omega_e)}+C\|f\|_{L^\infty(\Omega)},
\end{align}
holds for some constant $C>0$ independent of $u,\ f,$ and $g$. 
\end{proposition}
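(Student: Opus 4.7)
The plan is to prove the bound by constructing an explicit supersolution via the fractional torsion function and then invoking the comparison principle from Corollary \ref{comparison}. Write $M := \|g\|_{L^\infty(\Omega_e)}$ and $F := \|f\|_{L^\infty(\Omega)}$. I will exhibit a non-negative function $\phi$ on $\mathbb{R}^n$ that dominates $u$ pointwise in $\Omega$ and satisfies $\|\phi\|_{L^\infty(\Omega)} \leq M + CF$; the matching lower bound for $u$ then follows by running the same argument for $-u$, which solves the Dirichlet problem with data $(-f,-g)$ and the same nonnegative potential $a$.

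Fix $R > 0$ with $\overline{\Omega} \subset B_R$ and let $w$ be the explicit Getoor barrier
\[
w(x) := \kappa_{n,s}\bigl(R^2 - |x|^2\bigr)_+^{s},
\]
for the appropriate constant $\kappa_{n,s}$, so that $w \geq 0$ in $\mathbb{R}^n$, $w \equiv 0$ outside $B_R$, $(-\Delta)^s w = 1$ in $B_R \supset \Omega$, and $\|w\|_{L^\infty(\mathbb R^n)} = \kappa_{n,s} R^{2s}$. Define $\phi := M + F w$. Since $a \geq 0$, $M \geq 0$, $w \geq 0$, and $f \leq F$ pointwise,
\[
(-\Delta)^{s}\phi + a\phi \;=\; F + aM + aFw \;\geq\; F \;\geq\; f \quad \text{in } \Omega,
\]
while on $\Omega_e$ we have $\phi \equiv M \geq g$. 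Thus $\phi$ is a supersolution of \eqref{Linear fractional Schrodinger equation} with data $(F,M)$ that dominates $(f,g)$.

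Applying Corollary \ref{comparison} to $u$ and $\phi$ (equivalently, Proposition \ref{max_p} to $\phi - u$) yields $u \leq \phi$ in $\Omega$, and hence
\[
u(x) \;\leq\; M + F\,\|w\|_{L^\infty(\mathbb R^n)} \;\leq\; M + C\,F \quad \text{in } \Omega,
\]
with $C = \kappa_{n,s} R^{2s}$ depending only on $n$, $s$, and $\Omega$. The same argument applied to $-u$ gives $-u \leq M + CF$ in $\Omega$, so combining the two inequalities produces \eqref{sup norm estimate}.

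The main technical nuisance is that $\phi$ itself does not belong to $H^s(\mathbb{R}^n)$ because of the constant piece $M$, so it cannot literally be inserted into the weak formulation underlying Corollary \ref{comparison}. Two routes avoid this. First, one may observe that the proof of Proposition \ref{max_p} goes through verbatim for the difference $\phi - u$ using the test function $(u-\phi)^+$, which is supported in $\overline\Omega$ (since $u \leq g \leq M = \phi$ in $\Omega_e$) and thus lies in $\widetilde H^s(\Omega)$. Alternatively, one first compares $u$ with the \emph{weak} solution $V$ of $(-\Delta)^s V + aV = F$ in $\Omega$, $V = M$ in $\Omega_e$, to obtain $V \geq u$ by Corollary \ref{comparison}, and then compares $V$ with $\phi$: both have the same constant boundary trace $M$, so $V - \phi \in H^s(\mathbb{R}^n)$ and the barrier comparison is between genuinely admissible test functions. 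Either way, the constant-offset issue is a technicality that does not affect the final constant $C$.
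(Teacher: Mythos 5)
Your proof is correct and follows the same architecture as the paper's: build a supersolution of the form $\|g\|_{L^\infty(\Omega_e)}+\|f\|_{L^\infty(\Omega)}\cdot(\text{barrier})$, apply the comparison principle of Corollary \ref{comparison}, and repeat for $-u$. The only substantive difference is the barrier itself: the paper's Lemma \ref{Barrier Lemma} normalizes a smooth cutoff $\eta$ using the crude lower bound $(-\Delta)^s\eta+a\eta\geq \tfrac{1}{2}c_{n,s}\int_{\mathbb{R}^n\setminus B_R}|x-z|^{-n-2s}dz\geq\lambda$ in $\Omega$, whereas you use the explicit Getoor torsion function $w=\kappa_{n,s}(R^2-|x|^2)_+^{s}$, which gives $(-\Delta)^sw=1$ exactly in $B_R$ and the explicit constant $C=\kappa_{n,s}R^{2s}$; both are valid, yours being more explicit, the paper's requiring no closed-form identity. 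Two small remarks. First, your assertion that $\phi\equiv M$ on $\Omega_e$ is not literally true, since $w>0$ on $B_R\setminus\overline{\Omega}\subset\Omega_e$; but the inequality actually needed, $\phi\geq M\geq g$ in $\Omega_e$, holds anyway because $Fw\geq0$, so nothing breaks. Second, the constant-offset technicality you flag (the supersolution is not in $H^s(\mathbb{R}^n)$) is present in the paper's own proof as well, since its comparison function $v$ also contains the constant $\|g\|_{L^\infty(\Omega_e)}$ and Corollary \ref{comparison} is applied to it without comment; your fix via the test function $(u-\phi)^{+}\in\widetilde H^s(\Omega)$ (or the intermediate comparison with the weak solution having data $(F,M)$) is a legitimate way to make either version rigorous.
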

In order to derive \eqref{sup norm estimate}, we need to construct a barrier function for the fractional Schr\"{o}dinger equation.
\begin{lemma}[Barrier]\label{Barrier Lemma}  
Let $\Omega$ be a bounded Lipschitz domain in $\mathbb R^n$ and $a(x)\in L^\infty(\Omega)$ is a nonnegative potential. Then there exists a function $\varphi\in C^\infty _c (\mathbb R^n)$ such that 
\begin{equation}\label{varphi}
\left\{\begin{array}{rl}
(-\Delta)^s \varphi + a(x)\varphi \geq 1 &\mbox{ in }\Omega,\\
\varphi \geq 0 &\mbox{ in } \mathbb R^n,\\
\varphi \leq C & \mbox{ in }\Omega,
\end{array}\right.
\end{equation} 
where $C>0$ is a constant depending on $n,\ s,$ and $\Omega$.
\end{lemma}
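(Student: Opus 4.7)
Our strategy is to exploit the hypothesis $a \geq 0$ and construct a smooth compactly supported cutoff whose fractional Laplacian is uniformly bounded below on $\Omega$, then rescale it. Since we intend to take $\varphi \geq 0$, the term $a\varphi$ will automatically be nonnegative on $\Omega$, so it is enough to produce $\varphi \in C^\infty_c(\mathbb R^n)$ satisfying $\varphi \geq 0$ on $\mathbb R^n$, $\varphi \leq C$ on $\Omega$, and $(-\Delta)^s \varphi \geq 1$ on $\Omega$; the $a\varphi$ term will only help.

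First we would fix $R > 0$ large enough that $\overline\Omega \subset B_{R/2}$ and choose any bump function $\psi \in C^\infty_c(B_R)$ with $0 \leq \psi \leq 1$ on $\mathbb R^n$ and $\psi \equiv 1$ on an open neighborhood of $\overline\Omega$. For each $x \in \Omega$, since $\psi \equiv 1$ near $x$, the integrand defining the fractional Laplacian is bounded there and no principal value is needed, so
\[
(-\Delta)^s \psi(x) = c_{n,s}\int_{\mathbb R^n} \frac{1 - \psi(y)}{|x - y|^{n+2s}}\,dy,
\]
with pointwise nonnegative integrand since $\psi \leq 1$. Next we would restrict the domain of integration to $\{|y| \geq R\}$, where $\psi$ vanishes, and use the crude bound $|x - y| \leq R/2 + |y| \leq 2|y|$ to obtain
\[
(-\Delta)^s \psi(x) \geq c_{n,s}\int_{|y|\geq R} \frac{dy}{(2|y|)^{n+2s}} = c_{n,s}\,\omega_{n-1}\,2^{-n-2s}\int_R^\infty r^{-1-2s}\,dr =: c_0 > 0,
\]
a lower bound that is independent of $x \in \Omega$. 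Setting $\varphi := c_0^{-1}\psi$ and $C := c_0^{-1}$ would then furnish all three properties in \eqref{varphi}.

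We do not anticipate any serious obstacle. The only point worth verifying carefully is the uniform positivity of the ``tail'' contribution, which is immediate from the compact support of $\psi$ together with the boundedness of $\Omega$; the constant $c_0$ depends only on $n$, $s$, and the diameter of $\Omega$, as required. In particular, the Lipschitz hypothesis on $\partial\Omega$ plays no role beyond ensuring the existence of the smooth cutoff $\psi$, and the same construction would work for any bounded domain.
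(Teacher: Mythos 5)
Your proposal is correct and follows essentially the same route as the paper: take a cutoff equal to $1$ on $\Omega$ and supported in a large ball $B_R$, bound $(-\Delta)^s$ of it below on $\Omega$ by the kernel's tail integral over $\{|y|\ge R\}$ (the term $a\varphi\ge 0$ only helping), and rescale by that constant. The only cosmetic difference is how the singularity is handled — the paper uses the symmetric second-difference representation, while you require $\psi\equiv 1$ on a neighborhood of $\overline\Omega$ so the integrand vanishes near the singularity and the principal value is unnecessary — and your explicit computation of the uniform tail constant $c_0$ is a slightly more detailed version of the paper's asserted bound $\lambda>0$.
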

\begin{proof}
Let $B_R$ be an arbitrarily large ball such that $\Omega \Subset B_R$ and $\eta\in C^\infty_c(B_R)$ be a smooth cutoff function such that  
\begin{equation*}
0\leq\eta \leq 1\mbox{ in }\mathbb R^n,\    \eta\equiv 1 \mbox{ in }\Omega .
\end{equation*} 
For any $x\in \Omega$, it is clear that $\eta(x)=1$ that is the maximum value of $\eta$. Thus, one has
\begin{align}\label{bdd_eta}
2\eta(x)-\eta(x+y)-\eta(x-y)\geq \eta (x)-\eta (x+y)\geq 0.
\end{align} Recall that for any function $u$ in the Schwartz space, we can also represent the fractional Laplacian as (see \cite{di2012hitchhiks} for instance)
\begin{align}\notag
(-\Delta)^su(x)=\dfrac{c_{n,s}}{2} \int _{\mathbb R^n}\dfrac{2u(x)-u(x+y)-u(x-y)}{|y|^{n+2s}}dy\ \hbox{ for all $x\in\R^n$} ,
\end{align}
where $c_{n,s}$ is the constant in \eqref{c(n,s) constant}. 
For any $x\in\Omega$, from \eqref{bdd_eta} and by using the change of variables $z=x+y$, one has
\begin{align*}
(-\Delta)^s \eta +a(x)\eta &\geq {1\over 2} c_{n,s}\int_{\mathbb R^n}\dfrac{\eta(x)-\eta (z)}{|x-z|^{n+2s}}dz+a(x)\eta \notag\\
&\geq {1\over 2}c_{n,s} \int _{\mathbb R^n\backslash B_R}\dfrac{1}{|x-z|^{n+2s}}dz \notag\\
&\geq \lambda 
\end{align*}
for some constant $\lambda>0$. Now, we let $\varphi(x) := \eta(x)/\lambda $, then we complete the proof.
\end{proof}

It remains to prove the $L^\infty$ bound for the solution $u$.

\begin{proof}[Proof of Proposition \ref{prop of sup bound}]
Let
\begin{equation}\notag
v(x):=\|g\|_{L^\infty(\Omega_e)}+\|f\|_{L^\infty (\Omega)}\varphi (x),
\end{equation} 
where $\varphi(x)$ is the barrier given by Lemma \ref{Barrier Lemma}. From $a(x)\geq 0$ and \eqref{varphi}, we deduce that 
$$
(-\Delta)^s u+a(x)u=f\leq (-\Delta)^s v+a(x)v \mbox{ in }\Omega 
$$
and 
$g\leq v \mbox{ in }\Omega_e$. Applying the comparison principle in Corollary \ref{comparison}, we obtain 
\begin{equation}\notag
u\leq \|g\|_{L^\infty (\Omega_e)}+C\|f\|_{L^\infty(\Omega)} \mbox{ in }\Omega,
\end{equation}
where we use $\varphi \leq C$ in $\Omega$.
Similarly, the same argument will hold for $-u$, which finishes the proof.

\end{proof}

\section{Proof of Theorem \ref{MAIN THEOREM}}
 
In this section, we apply the linearization scheme to transfer the inverse problem for the nonlocal semilinear Schr\"odinger equation to the inverse problem for the nonlocal linear equation.

%To study the inverse boundary value problem for the semilinear equation  $(-\Delta)^{s}+q(x,u)$, the strategy is based on the linearization scheme for this inverse problem. After linearization, the inverse problem for the nonlocal semilinear Schr\"odinger equation can be resolved by solving the problem for the nonlocal linear equation.

\subsection{Linearization}
This linearization method was used in the local type inverse problem, see \cite{victor01, isakov1994global,sun2004inverse,sun2010inverse}. 
\begin{theorem}\label{Linearization Theorem}
Let $n\geq 2$ and $0<s<1$. Let $g$ and $h$ be in $C^3_0(\Omega_{e})$ and $\eta$ be in $\R$. Suppose that $u_{g+\eta h}$ is the solution of \eqref{MAIN THEOREM} with $u_{g+\eta h}=g+\eta h$ in $\Omega_e$. Suppose that $u^{*}$
is the unique solution of the linearized equation 
\begin{equation}
\left\{\begin{array}{rl}
(-\Delta)^{s}u^{*}+\p_tq(x,u_{g})u^{*}=0 & \mbox{ in }\Omega,\\
u^{*}=h & \mbox{ in }\Omega_{e},
\end{array}\right.\label{Linearized equation}
\end{equation}
then we have 
\[
\lim_{\eta\to0}\left\Vert \dfrac{u_{g+\eta h}-u_{g}}{\eta}-u^{*}\right\Vert _{H^{s}(\mathbb{R}^{n})}=0.
\]
\end{theorem}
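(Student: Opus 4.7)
The plan is to define $v_\eta := (u_{g+\eta h}-u_g)/\eta$ and $w_\eta := v_\eta - u^*$, and to show $\|w_\eta\|_{H^s(\mathbb R^n)}\to 0$ by reducing to an energy estimate for a fractional linear problem with vanishing exterior data. Since $u_{g+\eta h}-u_g = \eta h$ in $\Omega_e$, both $v_\eta$ and $u^*$ equal $h$ outside $\Omega$, hence $w_\eta \in \widetilde H^s(\Omega)$, which lets me exploit the fractional Poincar\'e inequality on $\widetilde H^s(\Omega)$.

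First I identify the PDE for $v_\eta$. Subtracting the semilinear equations for $u_{g+\eta h}$ and $u_g$ and applying the fundamental theorem of calculus in the $t$ variable,
\[
\frac{q(x,u_{g+\eta h})-q(x,u_g)}{\eta} \;=\; v_\eta(x)\int_0^1 \partial_t q\bigl(x,u_g+\tau\eta v_\eta\bigr)\,d\tau \;=:\; a_\eta(x)\,v_\eta(x),
\]
which gives $(-\Delta)^s v_\eta + a_\eta v_\eta = 0$ in $\Omega$ with $v_\eta = h$ in $\Omega_e$. Hypothesis \eqref{Condition for linearized equation} guarantees $0\le a_\eta\le M_0$, so Proposition \ref{prop of sup bound} yields the uniform bound $\|v_\eta\|_{L^\infty(\mathbb R^n)}\le \|h\|_{L^\infty(\Omega_e)}$.

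Setting $a(x):=\partial_t q(x,u_g(x))$, subtracting the linearized equation \eqref{Linearized equation} shows
\[
(-\Delta)^s w_\eta + a(x)\,w_\eta \;=\; (a-a_\eta)\,v_\eta \ \text{ in }\Omega, \qquad w_\eta = 0 \ \text{ in }\Omega_e.
\]
Testing against $w_\eta$, dropping the nonnegative term $\int_\Omega a\,w_\eta^2$, and applying the fractional Poincar\'e inequality yields
\[
\|w_\eta\|_{H^s(\mathbb R^n)} \;\lesssim\; \|(a-a_\eta)v_\eta\|_{L^2(\Omega)} \;\le\; \|v_\eta\|_{L^\infty(\Omega)}\,\|a-a_\eta\|_{L^2(\Omega)}.
\]
The first factor is uniformly bounded; for the second, the uniform $L^\infty$-bound on $v_\eta$ forces $u_g+\tau\eta v_\eta \to u_g$ uniformly in $\mathbb R^n$, so continuity of $\partial_t q$ plus dominated convergence on $\tau\in[0,1]$ give $a_\eta\to a$ pointwise a.e., and a second dominated convergence (using $|a_\eta-a|\le 2M_0$ and $|\Omega|<\infty$) upgrades this to $\|a_\eta-a\|_{L^2(\Omega)}\to 0$.

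The main obstacle I anticipate is passing to the limit inside the coefficient $a_\eta$: a priori the solutions $u_{g+\eta h}, u_g$ only lie in $H^s$, which need not embed into $L^\infty$ when $s<n/2$, so continuity of $\partial_t q$ alone provides no control on $a_\eta$. The uniform $L^\infty$ estimate from Proposition \ref{prop of sup bound}, resting crucially on the boundedness assumption \eqref{Condition for linearized equation}, is precisely what supplies uniform convergence $u_{g+\eta h}\to u_g$ and closes the dominated-convergence argument.
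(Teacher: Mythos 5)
Your proof is correct and essentially reproduces the paper's argument: the difference quotient solves a linear fractional Schr\"odinger equation with the averaged potential $a_\eta$, the error $w_\eta=v_\eta-u^*$ solves a linear problem with zero exterior data whose right-hand side is handled by an energy estimate, and the $L^\infty$ bound of Proposition \ref{prop of sup bound} supplies the uniform smallness of $u_{g+\eta h}-u_g$ needed to pass to the limit in the potential. The only minor difference is bookkeeping: you place the potential difference against $v_\eta$ and get $\|a_\eta-a\|_{L^2(\Omega)}\to 0$ by pointwise convergence plus dominated convergence, while the paper places it against $u^*$ and asserts $L^\infty$ convergence directly from continuity of $\partial_t q$; your variant is, if anything, slightly more careful on that step.
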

% % % % % % % % %
%We also have $\lim_{\eta\to0}\left\Vert (-\Delta)^s\LC\dfrac{u_{g+\eta h}-u_{g}}{\eta}\RC-(-\Delta)^s u^{*}\right\Vert _{H^{-s}(\mathbb{R}^{n})}=0$. 
%
% % % % % % % % %

\begin{proof}
First, by using \eqref{Condition for linearized equation}, i.e., $0\leq\partial_tq(x,t)\leq M_0$ for $(x,t)\in \overline\Omega\times\R $  %\footnote{In (1.6), $0<\p_tq\leq M_0/b_0$ only holds in $x\in\overline{\Omega}$ for $|t|\geq r$. We might need to enhance the assumption in (1.6) in order to get this.} 
gives the well-posedness of 
the fractional linear Schr\"{o}dinger equation, i.e., for
a given function $h\in C^3_0(\Omega_{e})\subset H^{s}(\Omega_{e})$,
there exists a unique weak solution $u^{*}\in H^{s}(\mathbb{R}^{n})$
such that $u^*$ solves \eqref{Linearized equation}. 

Next, consider 
\[
w:=\dfrac{u_{g+\eta h}-u_{g}}{\eta}\in H^{s}(\mathbb{R}^{n}),
\]
for $\eta\in\mathbb{R}$, then $w$ is a solution of 
\[
\left\{ \begin{array}{rl}
(-\Delta)^{s}w+Q(x)w=0 & \mbox{ in }\Omega,\\
w=h & \mbox{ in }\Omega_{e},
\end{array}\right.
\]
where 
\[
Q(x)=\int_{0}^{1}\partial_{t}q(x,t u_{g+\eta h}(x)+(1-t)u_{g}(x))dt\geq 0.
\]
Let $v:=w-u^*$, then $v$ solves
\begin{equation}
\left\{ \begin{array}{rl}
(-\Delta)^sv+Q(x)v=-\LC Q(x)-\p_t q (x,u_g)\RC u^* & \mbox{ in }\Omega,\\
v=0 & \mbox{ in }\Omega_{e}.
\end{array}\right.\label{reflected equation}
\end{equation}
By multiplying $v$ on both sides of \eqref{reflected equation}, we can see that  
\begin{align}\label{equ-v}
   \|v\|_{H^{s}(\mathbb{R}^{n})} &\leq C\|\LC Q(x)-\p_t q (x,u_g) \RC  u^*\|_{L^2(\mathbb{R}^n)}\notag\\
&\leq C\|Q(x)-\p_t q (x,u_g)\|_{L^\infty(\Omega)}\|u^*\|_{L^2(\mathbb R^n)},
\end{align}
for some constant $C>0$ independent of $v$ and $u^*$. 

Now, since $u_{g+\eta h}-u_g \in H^s(\mathbb R^n)$ solves 
\begin{align}\notag
\left\{ \begin{array}{rl}
(-\Delta)^s(u_{g+\eta h}-u_g)+Q(x)(u_{g+\eta h}-u_g)=0& \mbox{ in }\Omega,\\
u_{g+\eta h}-u_g=\eta h & \mbox{ in }\Omega_e,
\end{array}\right.
\end{align}
where $h \in C^3_0(\Omega_e)\subset L^\infty(\Omega_e)$. By the $L^\infty$ estimate \eqref{sup norm estimate}, we have 
\begin{equation}\notag
	\|u_{g+\eta h}-u_g\|_{L^\infty (\Omega )}\leq \eta\|h\|_{L^\infty(\Omega_e)}\to 0,
\end{equation}
as $\eta\to 0$.

By the continuity of $\partial_tq(x,t)$, it implies that
\begin{align*}
\|\p_t q (x,t u_{g+\eta f}+(1-t)u_g)-\p_t q  (x,u_g)\|_{L^\infty(\Omega)}\to 0
\end{align*}
as $\eta \to 0$  %, where we have used the condition that $q_t(x,t)$ is continuous in $t$. 
that leads to
\begin{align*}
\|Q(x)-\p_t q (x,u_g)\|_{L^\infty(\Omega)} \leq\int_0^1\|\p_t q (x,t u_{g+\eta f}+(1-t)u_g)-\p_t q  (x,u_g)\|_{L^\infty(\Omega)}d\zeta \to 0,
\end{align*}
whenever $\eta \to 0$. Combining with \eqref{equ-v}, the proof is complete.

\end{proof}

\subsection{Proof of Theorem \ref{MAIN THEOREM}}
Now, we are ready to prove our main theorem.

\begin{proof}[Proof of Theorem \ref{MAIN THEOREM}]
	For any $g\in C^3_0(W)\subset C^3_0(\Omega_{e})$,
	let $u_{g}^{(1)}\in H^{s}(\mathbb{R}^{n})$ be a solution of 
	\[
	\left\{\begin{array}{rl}
	(-\Delta)^{s}u_{g}^{(1)}+q_1(x,u_{g}^{(1)})=0 & \mbox{ in }\Omega,\\
	u_{g}^{(1)}=g & \mbox{ in }\Omega_{e}.
	\end{array}\right.
	\]
	From the Cauchy data assumption \eqref{Partial Cauchy data}, there exists a solution
	$u_{g}^{(2)}\in H^{s}(\mathbb{R}^{n})$ such that $u_{g}^{(2)}=u_{g}^{(1)}$,
	$\mathcal{N}^s_{q_2}u_{g}^{(2)}=\mathcal{N}^s_{q_1}u_{g}^{(1)}$ in $W$
	and $u_{g}^{(2)}$ solves 
	\[
	\left\{\begin{array}{rl}
	(-\Delta)^{s}u_{g}^{(2)}+q_2(x,u_{g}^{(2)})=0 & \mbox{ in }\Omega,\\
	u_{g}^{(2)}=g & \mbox{ in }\Omega_{e}.
	\end{array}\right.
	\]
	For any $h\in C^3_0(W)\subset C^3_0(\Omega_e)$, % and $\eta\in\R$,
	using the Cauchy data assumption again, there are solutions $u_{g+\eta h}^{(j)}\in H^{s}(\mathbb{R}^{n})$
	of 
	\begin{align*} 
	\left\{\begin{array}{rl}
		(-\Delta)^{s}u_{g+\eta h}^{(j)}+q_j(x,u_{g+\eta h}^{(j)})=0 & \mbox{ in }\Omega,\\ u_{g+\eta h}^{(j)}=g+\eta h & \mbox{ in } \Omega_{e},
			\end{array}\right.
	\end{align*}
		 for
	$j=1,2$, such that 
	\begin{align}\notag
	\mathcal{N}^s_{q_1}u_{g+\eta h}^{(1)}=\mathcal{N}^s_{q_2}u_{g+\eta h}^{(2)}\mbox{ in }W\subset \Omega_{e}  \label{perturbed solutions}
	\end{align}
	for any $\eta\in\mathbb{R}$.
	We differentiate the above equation with respect to $\eta$ at $\eta=0$ and use \eqref{relation of Ns and DN map} with Theorem \ref{Linearization Theorem}, then we obtain that 
	\begin{equation}
	\mathcal{N}^s_{q_1}\dot{u}_{g,h}^{(1)}=\mathcal{N}^s_{q_2}\dot{u}_{g,h}^{(2)}\mbox{ in }W,\label{equal Neumann data}
	\end{equation}
	where $\dot{u}_{g,h}^{(1)}$ and $\dot{u}_{g,h}^{(2)}$ are solutions
	of 
	\begin{align}\label{linear 1}
		(-\Delta)^{s}\dot{u}_{g,h}^{(1)}+\p_t q_1 (x,u_{g}^{(1)})\dot{u}_{g,h}^{(1)}=0\mbox{ in }\Omega\mbox{ with }\dot{u}_{g,h}^{(1)}=h\mbox{ in }\Omega_{e},
	\end{align}
	and 
	\begin{align}\label{linear 2}
	(-\Delta)^{s}\dot{u}_{g,h}^{(2)}+\p_t q_2 (x,u_{g}^{(2)})\dot{u}_{g,h}^{(2)}=0\mbox{ in }\Omega\mbox{ with }\dot{u}_{g,h}^{(2)}=h\mbox{ in }\Omega_{e}.
	\end{align}
	
	Recall that $\partial_tq_j\in L^{\infty}$ and $\partial_tq_j\geq0$ in $\Omega$. Thus, by
	using the well-posedness of the linearized fracional Schr\"{o}dinger equation
	(see Section 2 in \cite{ghosh2016calder}), nonlocal DN maps $\Lambda_{\partial_tq_j}$
	exist and are defined by
	\[
	\Lambda_{\partial_tq_j(x,u_{g}^{(j)})}:\dot{u}_{g,h}^{(j)}|_{\Omega_{e}}\to(-\Delta)^{s}\dot{u}_{g,h}^{(j)}|_{\Omega_{e}},\mbox{ for }j=1,2.
	\]
	For a fixed $g\in C^3_0(W)$, since \eqref{equal Neumann data} holds for any $h\in C^3_0(W)$, by using \eqref{relation of Ns and DN map}, we derive 
	\begin{align}\notag
	\Lambda_{\partial_tq_1(x,u^{(1)}_g)}h|_W=\Lambda_{\partial_t q_2(x,u^{(2)}_g)}h|_W\mbox{ for any }h\in C^3_0(W).
	\end{align}
	Thus, we can use global uniqueness of the fractional linear Schr\"{o}dinger equation (see Theorem 1.1 in \cite{ghosh2016calder}), then we can conclude 
	\begin{equation}\label{equal potentials}
		\p_t q_1 (x,u^{(1)}_g)=\p_t q_2(x,u^{(2)}_g)\mbox{ in }\Omega.
	\end{equation}
	Via \eqref{equal potentials}, we know that $\dot{u}_{g,h}^{(1)}$ and $\dot{u}_{g,h}^{(2)}$ solve \eqref{linear 1} and \eqref{linear 2} (with the same coefficients), respectively. From the well-posedness for the fractional linear  Schr\"{o}dinger equation again, one can see that the weak solution will be unique, that is,
	\begin{equation*}\label{whole equality}
	\dot{u}_{g,h}^{(1)}=\dot{u}_{g,h}^{(2)} \mbox{ in } H^s (\mathbb R^n).
	\end{equation*}
	In particular, we take the original $g$ by $\eta g$ and $h$ by $g$, then we have 
	\begin{equation*}
	\dot{u}^{(1)}_{\eta g,g}=\dot{u}^{(2)}_{\eta g,g}\mbox{ in }H^s(\mathbb R^n), \mbox{ for any }\eta \in \mathbb R.
	\end{equation*}
	By using Theorem \ref{Linearization Theorem}, this implies that 
	\begin{equation*}
	\dfrac{d}{d\eta}u^{(1)}_{\eta g}=\dfrac{d}{d\eta}u^{(2)}_{\eta g}\mbox{ in }H^s (\mathbb R^n )\mbox{ for any }\eta \in \mathbb R .
	\end{equation*}
	Hence, there exists a function $\psi=\psi(x)\in H^s(\mathbb R^n)$ independent of $\eta$ such that 
	\begin{equation}\label{equivalence class}
	u^{(1)}_g=u^{(2)}_g+\psi \mbox{ in } H^s(\mathbb R^n) 
	\end{equation}
	for all $g\in C^3_0(\Omega_e)$. 
	%Note that from the above construction, then one can acquire that the function $\psi$ is independent of $g$, which means \eqref{equivalence class} holds for all $g\in H^{2s}_0 (\Omega_e)\cap L^\infty(\Omega_e)$. 
	
%	----------------------------------------------
	
%	{\color{red}
%	Changes:\\
%	\begin{enumerate}
%	\item I change $\mathcal{N}_s$ to $\mathcal{N}^s_{q}$ which can clearly express its dependence on potential $q$
%	\end{enumerate}
%	Questions:\\
%	\begin{enumerate}
%	   \item $\mathcal{N}^s_{q_1} u^1=\mathcal{N}^s_{q_2} u^2$ %implies $\Lambda_{\p_t q_1} f= \Lambda_{\p_t q_2} f$? Yes.
	   
%	   \item Here $\psi$ is formally like $	u^{(1)}_0-	u^{(2)}_0$ by fundamental theorem of calculus. Then by Cauchy data, we conclude in the following that $\psi=0$. This means we have the same solution $u^{(1)}_0=	u^{(2)}_0$ for nonlinear Schr\"odinger equation for different potential $q_j(x,u(x))$ with external data $0$. It is quite weird! Note that it might be not a issue, I just feel it is too strong.
%	\end{enumerate}
%	}
%		----------------------------------------------
		
	Now, by using the assumption on Cauchy data \eqref{Partial Cauchy data}, 
	\begin{equation}\notag
	\{(u^{(1)}_g|_W,\mathcal N^s_{q_1} u^{(1)}_g|_W) \}=\{(u^{(2)}_g|_W,\mathcal N^s_{q_2} u^{(2)}_g|_W) \},	
	\end{equation}
	we can obtain $\psi \in H^s(\mathbb R ^n)$ such that
	\begin{equation}\label{zero exterior data}
	\psi=(-\Delta)^s\psi =0\mbox{ in }W\subset \Omega_e.
	\end{equation}
	We apply Theorem 1.2 in \cite{ghosh2016calder}, then the function $\psi \equiv 0$ in $\mathbb R^n$. Finally, we substitute $u^{(1)}_g(x)=u^{(2)}_g(x)$ for $x\in \mathbb R^n$ into the fractional semilinear Schr\"{o}dinger equation % to get
	\begin{equation}\notag
	(-\Delta)^su^{(1)}_g+q_1(x,u^{(1)}_g)=(-\Delta)^su^{(2)}_g+q_2(x,u^{(2)}_g)\mbox{ in }\Omega,
	\end{equation} 
	which implies   
	\begin{equation}\notag
    q_1(x,u)=q_2(x,u)\mbox{ for }x\in\Omega.		
	\end{equation}
	This completes the proof of our main result.
\end{proof}
\vskip1cm
\textbf{Acknowledgment.}
The second author was supported in part by MOST of Taiwan 160-2917-I-564-048. Both authors would like to thank Camelia Pop for helpful discussions.

\bibliographystyle{plain}
\bibliography{ref}

\begin{thebibliography}{10}

\bibitem{bisci2016variational}
Giovanni~Molica Bisci, Vicentiu~D Radulescu, and Raffaella Servadei.
\newblock {\em Variational methods for nonlocal fractional problems}, volume
  162.
\newblock Cambridge University Press, 2016.

\bibitem{di2012hitchhiks}
Eleonora Di~Nezza, Giampiero Palatucci, and Enrico Valdinoci.
\newblock Hitchhiker's guide to the fractional {S}obolev spaces.
\newblock {\em Bulletin des Sciences Math{\'e}matiques}, 136(5):521--573, 2012.

\bibitem{dipierro2014nonlocal}
Serena Dipierro, Xavier Ros-Oton, and Enrico Valdinoci.
\newblock Nonlocal problems with {N}eumann boundary conditions.
\newblock {\em Rev. Mat. Iberoam.}, 33:377--416, 2017.

\bibitem{frank2016uniqueness}
Rupert Frank, Enno Lenzmann, and Luis Silvestre.
\newblock Uniqueness of radial solutions for the fractional {L}aplacian.
\newblock {\em Communications on Pure and Applied Mathematics},
  69(9):1671--1726, 2016.

\bibitem{ghosh2017calder}
Tuhin Ghosh, Yi-Hsuan Lin, and Jingni Xiao.
\newblock The {C}alder\'{o}n problem for variable coefficients nonlocal
  elliptic operators.
\newblock {\em arXiv preprint arXiv:1708.00654}, 2017.

\bibitem{ghosh2016calder}
Tuhin Ghosh, Mikko Salo, and Gunther Uhlmann.
\newblock The {C}alder{\'o}n problem for the fractional {S}chr{\"o}dinger
  equation.
\newblock {\em arXiv:1609.09248}, 2016.

\bibitem{victor01}
Victor Isakov.
\newblock Uniqueness of recovery of some quasilinear partial differential
  equations.
\newblock {\em Commun. in partial differential equations}, 26(11,
  12):1947--1973, 2001.

\bibitem{VictorN}
Victor Isakov and A~Nachman.
\newblock Global uniqueness for a two-dimensional elliptic inverse problem.
\newblock {\em Trans.of AMS}, 347:3375--3391, 1995.

\bibitem{isakov1994global}
Victor Isakov and John Sylvester.
\newblock Global uniqueness for a semilinear elliptic inverse problem.
\newblock {\em Communications on Pure and Applied Mathematics},
  47(10):1403--1410, 1994.

\bibitem{Kay}
Kay Kirkpatrick and Yanzhi Zhang.
\newblock Fractional {S}chr\"odinger dynamics and decoherence.
\newblock {\em Physica D: Nonlinear Phenomena}, 332(14):41--54, 2016.

\bibitem{ros2015nonlocal}
Xavier Ros-Oton.
\newblock Nonlocal elliptic equations in bounded domains: a survey.
\newblock {\em Publicacions Matem\`{e}tiques.}, 60(1):3--26, 2016.

\bibitem{ruland2017fractional}
Angkana R{\"u}land and Mikko Salo.
\newblock The fractional {C}alder\'{o}n problem: low regularity and stability.
\newblock {\em arXiv preprint arXiv:1708.06294}, 2017.

\bibitem{sun2004inverse}
Ziqi Sun.
\newblock Inverse boundary value problems for a class of semilinear elliptic
  equations.
\newblock {\em Advances in Applied Mathematics}, 32(4):791--800, 2004.

\bibitem{sun2010inverse}
Ziqi Sun.
\newblock An inverse boundary-value problem for semilinear elliptic equations.
\newblock {\em Electronic Journal of Differential Equations (EJDE)[electronic
  only]}, 37:1--5, 2010.

\bibitem{uhlmann2009calderon}
Gunther Uhlmann.
\newblock {E}lectrical impedance tomography and {C}alder\'on's problem.
\newblock {\em Inverse Problems}, 25:123011, 2009.

\bibitem{Uzar}
Neslihan Uzar and Sedat Ballikaya.
\newblock Investigation of classical and fractional {B}ose-{E}instein
  condensation for harmonic potential.
\newblock {\em Physica A: Statistical Mechanics and its Applications},
  392(8):1733--1741, 2013.

\end{thebibliography}

\end{document}